\newtheorem{theorem}{Theorem}[section]
\newtheorem{lemma}[theorem]{Lemma}
\newtheorem*{tha*}{{\textbf{Theorem A}}}
\newtheorem*{thb*}{{\textbf{Theorem B}}}
\theoremstyle{definition}
\newtheorem{remark}[theorem]{Remark}
\numberwithin{equation}{section}
\newcommand{\vertiii}[1]{{\left\vert\kern-0.25ex\left\vert\kern-0.25ex\left\vert #1 
    \right\vert\kern-0.25ex\right\vert\kern-0.25ex\right\vert}}
\newcounter{smallromans}
\newenvironment{romanenumerate}
{\begin{list}{{\normalfont\textrm{(\roman{smallromans})}}}%
  {\usecounter{smallromans}\setlength{\itemindent}{0cm}%
   \setlength{\leftmargin}{5.5ex}\setlength{\labelwidth}{5.5ex}%
   \setlength{\topsep}{.5ex}\setlength{\partopsep}{.5ex}%
   \setlength{\itemsep}{0.1ex}}}%
{\end{list}}
\newcounter{smallromansdash}
\newcounter{bigromans} 
  {\end{list}}
\begin{document}


\baselineskip=17pt

\date{\today}
\title[Complementability of Banach spaces in the bidual]{Invariant means on Abelian groups capture complementability of Banach spaces\\ in their second duals}

\author{Adam P.~Goucher}
\address{Department of Pure Mathematics and Mathematical Statistics, University of Cambridge}
\email{goucher@dpmms.cam.ac.uk}

\author{Tomasz Kania}
\address{Institute of Mathematics, Czech Academy of Sciences, \v{Z}itn\'{a} 25, 115~67 Prague 1, Czech Republic and Institute of Mathematics, Jagiellonian University, {\L}ojasiewicza 6, 30-348 Krak\'{o}w, Poland}
\date{\today}
\email{kania@math.cas.cz, tomasz.marcin.kania@gmail.com}

\subjclass[2010]{43A07 (primary) and 46B50 (secondary)} 
\keywords{amenable semigroup, invariant mean, vector-valued mean, principle of local reflexivity, Banach space complemented in bidual, free Abelian group, free commutative monoid}

\thanks{The second-named author acknowledges with thanks funding received from SONATA BIS no.~2017/26/E/ST1/00723.}
\begin{abstract}Let $X$ be a Banach space. Then $X$ is complemented in the bidual $X^{**}$ if and only if there exists an invariant mean $\ell_\infty(G, X)\to X$ with respect to a free Abelian group $G$ of rank equal to the cardinality of $X^{**}$, and this happens if and only if there exists an invariant mean with respect to the additive group of $X^{**}$. This improves upon previous results due to Bustos Domecq \cite{felix} and the second-named author \cite{kania}, where certain idempotent semigroups of cardinality equal to the cardinality of $X^{**}$ were considered, and answers a question of J.M.F.~Castillo (private communication) that was also considered in \cite{kania}. \emph{En route} to the proof of the main result, we endow the family of all finite-dimensional subspaces of an infinite-dimensional vector space with a structure of a~free commutative monoid with the property that the product of two subspaces contains the respective subspaces, which is possibly of interest in itself.\end{abstract}
\maketitle

\section{Introduction and the main result}

Every Banach space $X$ embeds canonically (linearly and isometrically) into its second dual space $X^{**}$ via the map $\kappa_X\colon X\to X^{**}$ defined by $\langle \kappa_X x, f\rangle = \langle f,x\rangle$ ($x\in X, f\in X^*$), so that it is customary to identify $X$ with a~closed subspace of $X^{**}$. The question of when is $X$ complemented in $X^{**}$ (that is, when does there exist a bounded linear projection from $X^{**}$ onto the image of $\kappa_X$) is a frequently recurring problem in Banach space theory as Banach spaces complemented in the bidual enjoy various, otherwise often unavailable, averaging properties such as the existence of vector valued invariant means with respect to amenable semigroups. Banach spaces complemented in their biduals include, besides reflexive spaces, dual spaces and weakly sequentially complete Banach lattices; $c_0$, the space of sequences convergent to 0, is the most prominent example of a space not complemented in its bidual.\smallskip

Let $X$ be a Banach space, $(S, \cdot)$ a semigroup, and $\lambda\geqslant 1$. A bounded linear operator $M\colon \ell_\infty(S,X)\to X$ of norm at most $\lambda$ such that for every $x\in X$, $s\in S$, and for all $f\in \ell_\infty(S,X)$ one has
\begin{romanenumerate}
\item $M(x\mathds{1}_S) = x$;
\item $M(f) = M({}_sf) =  M(f_s)$,\end{romanenumerate}
is called an $X$\emph{-valued invariant $\lambda$-mean} on $S$. Here, $\ell_\infty(S,X)$ denotes the Banach space of all bounded $X$-valued functions on $S$, $x\mathds{1}_S$ stands for the function constantly equal to $x$ on $S$, and ${}_sf(t) = f(s\cdot t)$ and $f_s(t) = (t\cdot s)$ ($s,t\in S$). When there is no need to emphasise the constant $\lambda$, $M$ is simply called an $X$\emph{-valued invariant mean on }$S$.\smallskip

A semigroup is \emph{amenable}, whenever it admits an $X$-valued 1-mean with respect to the one-dimensional space $X$ (namely, the scalar field). By the Markov--Kakutani fixed-point theorem, all commutative semigroups are ame\-na\-ble. In the literature, means invariant with respect to the additive monoid of natural numbers (with zero) are called \emph{Banach limits} (\emph{vector-valued Banach limits}, when $\dim X \geqslant 2$).\smallskip

Invariant means with respect to more complicated groups were employed by Pe{\l}czy\'{n}ski, who proved that for a Banach space $X$, if $Y\subseteq X$ is a closed subspace, which is complemented in $Y^{**}$ and there exists a Lipschitz map $r\colon X\to Y$ such that $r(y)=y$ for $y\in Y$, then $Y$ is (linearly) complemented in $X$ (\cite[pp.~61--62]{pelczynski}, see also \cite[Theorem 3.3]{benyamini}). 

 Suppose that $S$ is an amenable semigroup and $X$ is a Banach space that is complemented in $X^{**}$. Then one can construct an $X$-valued invariant {$\lambda$-mean} on $S$ as follows. Let $m$ be a~(scalar-valued) invariant mean on $S$. We define a map $\widetilde{M}\colon \ell_\infty(S, X)\to X^{**}$ by $$\langle \widetilde{M}f, \varphi\rangle = \big\langle m,  \langle \varphi, f(\boldsymbol{\cdot} )\rangle \big\rangle   \quad (\varphi \in X^*, f\in \ell_\infty(S, X)). $$
Since $\|m\|=1$, $\widetilde{M}$ is a norm-one bounded linear operator. If $P$ is a~projection from $X^{**}$ onto the canonical copy of $X$, then $$M = \kappa_X^{-1}P\widetilde{M}$$ is an $X$-valued invariant $\|P\|$-mean on $S$.\smallskip

In order to prove that the converse holds true as well, in \cite{felix, kania} the existence of an invariant mean on a certain idempotent semigroup $S$ was assumed; the semigroup $S$ comprises pairs $(F,\varepsilon)$, where $F\subset X^{**}$ is a~finite-dimensional subspace and $\varepsilon > 0$; the semigroup operation was defined as
$(F, \varepsilon) \cdot (G, \delta) = (F+G, \min\{\varepsilon, \delta\})$; related questions were also considered in \cite{radek}.\smallskip

In this note we consider the following question communicated privately to the second-named author by J.M.F. Castillo:

\begin{quote}
    \emph{Suppose that a Banach space $X$ admits an invariant mean with respect to every/some Abelian group. Must $X$ be complemented in $X^{**}$?}
\end{quote}\smallskip

The question arises from the observation that the semigroup $S$ used to prove the existence of a projection from $X^{**}$ onto $X$ is idempotent, so in a~sense, it is as far from being a (subset of a) group as possible. In the present paper we demonstrate that free Abelian groups of big enough rank suffice to prove the existence of a projection from the bidual.

\begin{tha*}\label{main}Let $X$ be a Banach space and $\lambda\geqslant 1$. Then the following assertions are equivalent.
\begin{romanenumerate}
\item\label{complement} $X$ is complemented in $X^{**}$ by a projection of norm at most $\lambda$;
\item\label{allamenable} for every amenable semigroup $S$ there exists an $X$-valued invariant $\lambda$-mean on $S$;
\item\label{cardinality} when $G$ is a free Abelian group of rank $|X^{**}|$, there exists an $X$-valued invariant $\lambda$-mean on $G$; 
\item\label{bidualcase} {there exists an $X$-valued invariant $\lambda$-mean on the additive group of $X^{**}$.}
\end{romanenumerate}\end{tha*}
The implication \eqref{complement} $\Rightarrow$ \eqref{allamenable}, has been already briefly explained and may be found, for example, in \cite[Theorem~1]{felix}. The implications \eqref{allamenable} $\Rightarrow$ \eqref{cardinality} and \eqref{allamenable} $\Rightarrow$ \eqref{bidualcase} are clear. We shall prove the implications \eqref{cardinality} $\Rightarrow$ \eqref{complement} and \eqref{bidualcase} $\Rightarrow$ \eqref{cardinality} in the subsequent section. \smallskip

Every Abelian group is a quotient of a free Abelian group, so by Lemma \ref{normal}(ii), clause (iii) in Theorem A is not weaker than the assertion that for every Abelian group $G$ of cardinality $X^{**}$ there exists an $X$-valued invariant $\lambda$-mean on $G$.\smallskip

The additive group of $X^{**}$ may seem large as already for separable Banach spaces that contain an isomorphic copy of $\ell_1$, it has the cardinality of the power set of the continuum; however, this is unavoidable. Indeed, it follows from \cite[Corollary 1.4]{kania} that the Banach space $X = \ell_\infty^c(\Gamma)$ of all countably supported bounded functions on an uncountable set $\Gamma$ (endowed with the supremum norm) admits invariant means with respect to all countable amenable semigroups, yet $X$ is not complemented in $X^{**}$ (\cite{pelsud}).\smallskip

\begin{remark}We remark in passing that the mere existence of an $X$-valued invariant mean with respect to \emph{some} commutative semigroup (that could be chosen as large as one wishes) is not sufficient for $X$ being complemented in $X^{**}$. To see this, fix an arbitrary Banach space $X$ (not necessarily complemented in $X^{**}$) and let $(S, \cdot)$ be a semigroup with an~element $0\in S$ such that $0\cdot s = s\cdot 0 = 0$ for all $s\in S$. Define $M\colon \ell_\infty(S,X)\to X$ by $Mf = f(0)$. Certainly, $M$ is a norm-one linear operator and $M(x \mathds{1}_S)=x$ for any $x\in X$. Moreover, for any $s\in S$, $M f_s = f_s(0) = f(0\cdot s) = f(0) = Mf = M{}_s f$. \end{remark}

It is known that if a separable Banach space has a monotone basis (or more generally, the Metric Approximation Property), then $X$ is 1-complemented in $X^{**}$ if and only if there exists an $X$-valued 1-mean on $\mathbb N$ (an $X$-valued Banach limit); see \cite[Corollary 4.2.6]{spaniards}. It follows from Lemma~\ref{groth} that this is equivalent to the existence of an $X$-valued invariant 1-mean on $\mathbb Z$. Thus, at least for very well-behaved separable Banach spaces complementability in the bidual can be indeed witnessed by the group of integers (the free abelian group of rank one).\medskip

The key result needed to establish Theorem~A is related to the possibility of turning the family of all finite-dimensional subspaces of $X^{**}$ into a~cancellative monoid.\smallskip

\begin{thb*} 

Let $V$ be an infinite-dimensional vector space over an arbitrary field. Denote by ${\rm Fin}\, V$ the set of all finite-dimensional subspaces of $V$. Then there exists a~binary operation $\ast$ on ${\rm Fin}\, V$ such that

\begin{romanenumerate}
\item $({\rm Fin}\, V, \ast)$ is a free commutative monoid;
\item for any $F,G \in {\rm Fin}\, V$ we have $F,G\subseteq F\ast G$.
\end{romanenumerate}
Consequently, the Grothendieck group of $({\rm Fin}\, V, \ast)$ is a free Abelian group.
\end{thb*}

It is quite clear that there is no counterpart of Theorem B for a finite-dimensional vector space $V$ as seen by taking three different hyperplanes $W_1, W_2, W_3$ in $V$ and noticing that $V=W_1\ast W_3 = W_2\ast W_3$, which makes cancellativity impossible to satisfy.\smallskip

It is noteworthy that the proof of Theorem B makes use of Zermelo's well-ordering principle and does depend on the chosen well-ordering of the set ${\rm Fin}\, X^{**}\setminus \{0\}$, however the resulting group is always free Abelian of rank $|X^{**}|$.

\section{Auxiliary facts}
We denote by $\mathbb N = \{0,1,2,3, \ldots\}$ the additive monoid of non-negative integers. As such, it is a submonoid of the group of integers $\mathbb Z$. A \emph{multiset} is formally a pair $(\Gamma, \varphi)$, where $\Gamma$ is an arbitrary non-empty set and $\varphi\colon \Gamma\to \{1,2,3\ldots \}$ is function counting the multiplicity of a given item in the multiset.\smallskip

An Abelian group $G$ is \emph{free}, when it is free as a module over $\mathbb Z$; as such $G$ is isomorphic to the direct sum $\mathbb Z^{(\alpha)}$ of $\alpha$-many copies of the group $\mathbb Z$. The number $\alpha$ is called the \emph{rank} of $G$. In particular, two uncountable free Abelian groups of the same cardinality are isomorphic. Besides free Abelian groups, we will require the notion of a \emph{free commutative monoid}, that is, a~monoid isomorphic to the direct sum $\mathbb N^{(\alpha)}$ of $\alpha$-many copies of $\mathbb N$ for some cardinal $\alpha.$\medskip

Let $(S,+)$ be a commutative semigroup. Then $S$ embeds into a group if and only if $S$ is cancellative. In the latter case, $S$ embeds into its \emph{Grothendieck group} $G(S)$, that is the group comprising equivalence classes of the relation $\sim$ on $S\times S$ given by $$(s_1, t_1)\sim (s_2, t_2) \iff s_1 + t_2 = t_1 + s_2\quad (s_1, s_2, t_1, t_2\in S).$$
Thus, formally $G(S) = \{s - t \colon s,t\in S\}$
and $0_S = 0_{G(S)}$ belongs to $S \cap (-S)$ if $S$ is already a~monoid. Of course, $\mathbb Z$ is the Grothendieck group of $\mathbb N$ and more generally $\mathbb{Z}^{(\alpha)}$ is the Grothendieck group of the free commutative monoid $\mathbb{N}^{(\alpha)}$ for any cardinal number $\alpha$.\smallskip

All commutative semigroups are amenable, yet subsemigroups of ame\-na\-ble groups need not be amenable. In the vector-valued case, we need to justify separately the possibility of passing to a subsemigroup, even in the commutative case. \smallskip

We shall require the following lemma concerning invariant means on groups that admit the infinite cyclic group as a quotient.

\begin{lemma}\label{trivial}
Let $X$ be a Banach space and let $G$ be a group with a surjective homomorphism $\theta\colon G \rightarrow \mathbb{Z}$. Suppose that $M\colon \ell_\infty(G,X)\to X$ is an $X$-valued invariant mean. Then $Mf = 0$ for any function $f$ supported on the kernel of $\theta$.
\end{lemma}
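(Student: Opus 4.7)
The strategy is to exploit right-invariance of $M$ together with the fact that right-translating a function supported on $\ker\theta$ by a preimage of~$1\in\mathbb Z$ shifts its support into a disjoint level set of $\theta$. Iterating, one produces arbitrarily long sums of translates of $f$ whose supports are pairwise disjoint, hence whose sup norm equals $\|f\|_\infty$. Linearity of $M$ then forces $Mf=0$.

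Fix $g\in G$ with $\theta(g)=1$, write the group operation multiplicatively, and set $f_n:=f_{g^n}$ so that $f_n(t)=f(tg^n)$. If $f_n(t)\neq 0$ then $tg^n\in\ker\theta$, giving $\theta(t)=-n$. In particular, $\mathrm{supp}\,f_n\subseteq\theta^{-1}(-n)$, and since the fibres $\theta^{-1}(-n)$ are pairwise disjoint for $n=0,1,2,\ldots$, the supports of $f_0,f_1,f_2,\ldots$ are pairwise disjoint as well. Moreover each $f_n$ is a translate of $f$, so $\|f_n\|_\infty=\|f\|_\infty$.

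By right-invariance of $M$, $M(f_n)=M(f)$ for every $n\geqslant 0$. Thus, for every $N\geqslant 1$, linearity yields
\[
M\Big(\textstyle\sum_{n=0}^{N-1} f_n\Big)=N\cdot M(f).
\]
Because the $f_n$ have pairwise disjoint supports, at each point $t\in G$ at most one summand is nonzero, so
\[
\Big\|\textstyle\sum_{n=0}^{N-1} f_n\Big\|_\infty=\max_{0\leqslant n<N}\|f_n\|_\infty=\|f\|_\infty.
\]
Combining the last two displays with $\|M\|\leqslant\lambda$ yields $N\,\|Mf\|\leqslant\lambda\,\|f\|_\infty$ for every $N$, which is possible only when $Mf=0$.

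There is no real obstacle: the only things used are the existence of a single element of $G$ that $\theta$ sends to $1$, right-invariance of $M$ (left-invariance would work symmetrically using $g^{-1}$), and the elementary fact that the sup norm of a sum of disjointly supported bounded functions equals the supremum of the individual sup norms.
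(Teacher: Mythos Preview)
Your proof is correct and follows essentially the same argument as the paper's: pick $g$ with $\theta(g)=1$, observe that the right-translates $f_{g^n}$ have supports in the pairwise disjoint fibres $\theta^{-1}(-n)$, and use linearity and boundedness of $M$ on the partial sums to force $Mf=0$. The only cosmetic difference is that the paper phrases it as a proof by contradiction while you argue directly.
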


\begin{proof}
Assume otherwise. Let $g \in G$ be an arbitrary element satisfying $\theta(g) = 1$. Then for each $n \in \mathbb{N}$ and $t \in G$ with $\theta(t) \neq -n$, we have that $\theta(t g^n) \neq 0$ and, therefore, $f_{g^n}(t) = f(t g^n) = 0$. As such, the functions $\{ f_{g^n}\colon n \in \mathbb{N} \}$ have pairwise disjoint support.\smallskip

By the translation-invariance, $M(f_{g^n}) = M(f)$ for every $n \in \mathbb{N}$,
so $$M(f_g + f_{g^2} + f_{g^3} + \cdots + f_{g^n}) = n M(f).$$ As the summands have pairwise disjoint supports, the sum has the same supremum norm as $f$ itself (and, in particular, is constant as a function of $n$). On the other hand, the norm of $n M(f)$ grows without bound as $n \rightarrow \infty$, contradicting the continuity of $M$.
\end{proof}


\begin{lemma}\label{groth}Let $X$ be a Banach space and let $\lambda \geqslant 1$. If for some $\alpha$ there exists an~$X$-valued invariant $\lambda$-mean on $\mathbb{Z}^{(\alpha)}$, then there exists such a mean on $\mathbb{N}^{(\alpha)}$ too.\end{lemma}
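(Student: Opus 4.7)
The plan is to set $M'(f) := M(\tilde f)$ for $f \in \ell_\infty(\mathbb{N}^{(\alpha)}, X)$, with $\tilde f \in \ell_\infty(\mathbb{Z}^{(\alpha)}, X)$ an extension of $f$ to the whole group. The norm bound $\|M'\| \leqslant \lambda$ is then immediate as soon as the chosen extension is norm-preserving, and I will concentrate on arranging the invariance and the constant condition $M'(x\mathds{1}_{\mathbb{N}^{(\alpha)}}) = x$.

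Invariance of $M'$ with respect to $\mathbb{N}^{(\alpha)}$ is where Lemma~\ref{trivial} enters. Take first the zero extension $\tilde f(g) = f(g)\mathds{1}_{\mathbb{N}^{(\alpha)}}(g)$. For $s \in \mathbb{N}^{(\alpha)}$, the function $\widetilde{{}_s f} - {}_s \tilde f$ on $\mathbb{Z}^{(\alpha)}$ vanishes on $\mathbb{N}^{(\alpha)}$ and on $\mathbb{Z}^{(\alpha)} \setminus (\mathbb{N}^{(\alpha)} - s)$; on the remaining set $(\mathbb{N}^{(\alpha)} - s) \setminus \mathbb{N}^{(\alpha)} = \{g : g_i \geqslant -s_i \text{ for all } i, \text{ but } g_j < 0 \text{ for some } j\}$ it equals $-f(s+g)$. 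Since $s$ has finite support, this set decomposes into a finite disjoint union of slices obtained by freezing, for each nonempty $T \subseteq \{i : s_i > 0\}$, the values $g_j$ ($j \in T$) to a tuple in $\prod_{j \in T}\{-s_j,\dots,-1\}$. Each such slice is contained in the coset $\{g : g_j = k_j\}$ of $\ker \pi_j$ for any single $j \in T$, where $\pi_j \colon \mathbb{Z}^{(\alpha)} \to \mathbb{Z}$ is the $j$-th coordinate projection. By Lemma~\ref{trivial} applied to $\pi_j$, together with shift-invariance of $M$ used to translate the coset onto $\ker \pi_j$ itself, $M$ annihilates any bounded function supported on such a coset. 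Summing over the finitely many slice contributions gives $M(\widetilde{{}_s f} - {}_s \tilde f) = 0$, whence $M'({}_s f) = M({}_s \tilde f) = M(\tilde f) = M'(f)$.

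The mean condition $M'(x \mathds{1}_{\mathbb{N}^{(\alpha)}}) = x$ is the principal obstacle. With the zero extension this would require $M(x \mathds{1}_{\mathbb{Z}^{(\alpha)} \setminus \mathbb{N}^{(\alpha)}}) = 0$, but $\mathbb{Z}^{(\alpha)} \setminus \mathbb{N}^{(\alpha)}$ is an \emph{infinite} union of cosets of kernels of coordinate projections, and Lemma~\ref{trivial} only annihilates $M$ coset-by-coset. To circumvent this, I would instead use a constant-preserving extension, for example the positive-part extension $\tilde f(g) := f(g^+)$ where $g^+$ is the coordinatewise non-negative part of $g$, which is norm-preserving and satisfies $\widetilde{x\mathds{1}_{\mathbb{N}^{(\alpha)}}} = x\mathds{1}_{\mathbb{Z}^{(\alpha)}}$, so that $M'(x\mathds{1}_{\mathbb{N}^{(\alpha)}}) = M(x\mathds{1}_{\mathbb{Z}^{(\alpha)}}) = x$ automatically. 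The cost is that the invariance error $f(s+g^+) - f((s+g)^+)$ is now supported on the (still finite) union $\bigcup_{j \in \mathrm{supp}(s)}\{g : g_j < 0\}$, which must be re-analysed by a similar coordinate slicing; the hard part of the argument is checking that, when this error is further decomposed by conditioning on each $g_j < 0$ for $j \in \mathrm{supp}(s)$, the telescoping behaviour along $\pi_j$-shifts combined with Lemma~\ref{trivial} is still enough to force each slice's contribution to $M$ to vanish.
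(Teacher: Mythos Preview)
Your plan is sound up to and including the observation that the zero extension gives invariance but not the constant condition; the gap is in the switch to the positive-part extension $\tilde f(g)=f(g^{+})$. Take $\alpha=1$ and $s=e_1=1$. Then, on $\{g<0\}$, one has $g^{+}+1=1$ and $(g+1)^{+}=0$, so the invariance error
\[
\widetilde{f_{1}}(g)-(\tilde f)_{1}(g)=f(g^{+}+1)-f((g+1)^{+})=\bigl(f(1)-f(0)\bigr)\,\mathds{1}_{\{g<0\}}
\]
is a fixed vector times the indicator of the whole negative half-line. Lemma~\ref{trivial} annihilates functions supported on a \emph{single} coset $\{g=k\}$ of $\ker\pi_1$, not on an infinite union of such cosets, and no telescoping along $\pi_1$-shifts saves you: shift-invariance only says $M\bigl(\mathds{1}_{\{g<0\}}\bigr)=M\bigl(\mathds{1}_{\{g<-1\}}\bigr)$, which is consistent with any value. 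Concretely, if $M$ is an invariant mean on $\mathbb{Z}$ obtained by Ces\`aro-averaging over $\{-n,\dots,-1\}$ and passing to a Banach limit, then $M\bigl(\mathds{1}_{\{g<0\}}\bigr)=1$, and your $M'$ fails to be shift-invariant for $f=\mathds{1}_{\{n\geqslant 1\}}$.

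The paper's repair is to use the \emph{absolute-value} extension $f'(t):=f(|t|)$ instead of $f(t^{+})$. This is equally norm- and constant-preserving, but the reflection symmetry $|t|=|{-t}|$ is exactly what makes the invariance error collapse to finitely many cosets. For a basis vector $e_i$ one writes
\[
f'=f'[\pi_i\leqslant -2]+f'[\pi_i=-1]+f'[\pi_i=0]+f'[\pi_i\geqslant 1],\qquad
(f_{e_i})'=(f_{e_i})'[\pi_i\leqslant -1]+(f_{e_i})'[\pi_i\geqslant 0],
\]
observes that $f'[\pi_i\leqslant -2]$ is a translate of $(f_{e_i})'[\pi_i\leqslant -1]$ and $f'[\pi_i\geqslant 1]$ a translate of $(f_{e_i})'[\pi_i\geqslant 0]$, and then kills the two remaining slices $f'[\pi_i=0]$ and $f'[\pi_i=-1]$ with Lemma~\ref{trivial}. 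With $g^{+}$ there is no analogous matching of the half-line $\{g_i<0\}$, which is why your argument stalls.
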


\begin{proof}
Let $M\colon \ell_\infty(\mathbb{Z}^{(\alpha)}, X)\to X$ be an $X$-valued invariant $\lambda$-mean. For $t\in \mathbb{Z}^{(\alpha)}$ we define $|t|$ coordinate-wise, that is, $|t|(i) = |t(i)|$ for $i\in \alpha$. Let us consider the operator $M^\prime \colon \ell_\infty(\mathbb{N}^{(\alpha)}, X)\to X$ given by the formula $M^\prime(f) = M(f')$, where $f'(t) := f(|t|)$ for $t\in \mathbb{Z}^{(\alpha)}$ and $f\in \ell_\infty(\mathbb{Z}^{(\alpha)}, X)$. 
In particular, for the function $f = x \mathds{1}_{\mathbb{N}^{(\alpha)}}$ ($x\in X$), we have $$M^\prime f = M'(x\mathds{1}_{\mathbb{N}^{(\alpha)}})= M(x\mathds{1}_{\mathbb{Z}^{(\alpha)}}) = x.$$

Moreover, $f'$ and $f$ have the same supremum norm by definition; consequently, the operator norm of $M'$ is upper-bounded by the operator norm of $M$, which is in turn no greater than $\lambda$. To show that $M'$ is an $X$-valued invariant $\lambda$-mean, it remains to show that $M'$ is translation-invariant. Since every $g \in \mathbb{N}^{(\alpha)}$ can be written as a finite sum of `basis elements' $e_i$ where $i \in \alpha$, it suffices to show that for every $i \in \alpha$ and every $f \in \ell_\infty(\mathbb{N}^{(\alpha)}, X)$, we have $M'(f_{e_i}) = M'(f)$.\smallskip

Let $\pi_i\colon \mathbb{Z}^{(\alpha)} \rightarrow \mathbb{Z}$ be the `projection map' homomorphism which satisfies $\pi_i(e_i) = 1$ and $\pi_i(e_j) = 0$ for all $j \neq i$. For a predicate $\phi\colon \mathbb{Z} \rightarrow \{ \textrm{true}, \textrm{false} \}$ and $h \in \ell_\infty(\mathbb{Z}^{(\alpha)}, X)$, we define $h[\phi(\pi_i)]$ to be the restriction of $h$ to the values where the predicate is true:
$$ h[\phi(\pi_i)](x) := \begin{cases}
h(x), & \textrm{ if } \phi(\pi_i(x)), \\
0, & \textrm{ otherwise.}
\end{cases} $$
Then we can write:
$$ f' = f'[\pi_i \leqslant -2] + f'[\pi_i = -1] + f'[\pi_i = 0] + f'[\pi_i \geqslant 1] $$
and similarly:
$$ (f_{e_i})' = (f_{e_i})'[\pi_i \leqslant -1] + (f_{e_i})'[\pi_i \geqslant 0]. $$
Observe the following:
\begin{itemize}
    \item $f'[\pi_i \leqslant -2]$ is a translate of $(f_{e_i})'[\pi_i \leqslant -1]$, so $$M(f'[\pi_i \leqslant -2]) = M((f_{e_i})'[\pi_i \leqslant -1]);$$
    \item $f'[\pi_i \geqslant 1]$ is a translate of $(f_{e_i})'[\pi_i \geqslant 0]$, so $$M(f'[\pi_i \geqslant 1]) = M((f_{e_i})'[\pi_i \geqslant 0]);$$
    \item $M(f'[\pi_i = 0])$ is zero by Lemma~\ref{trivial};
    \item $M(f'[\pi_i = 1])$ is also zero, by Lemma~\ref{trivial} combined with translation-invariance of $M$.
\end{itemize}

By linearity of $M$, it follows that $M(f') = M((f_{e_i})')$, and therefore (by definition) $M'(f) = M'(f_{e_i})$. The result follows.
\end{proof}

It is a standard fact that subgroups and quotients of amenable discrete groups are amenable. Using exactly the same ideas one can prove that if a~Banach space admits an invariant mean with respect to a group, then so does it with respect to subgroups and quotients of the said group. 

\begin{lemma}\label{normal}Let $X$ be a Banach space and let $G$ be a group with a normal subgroup $H$. Suppose that there exists an  $X$-valued invariant $\lambda$-mean $M\colon \ell_\infty(G,X)\to X$ on $G$. Then,
\begin{romanenumerate}
\item there exists an $X$-valued invariant $\lambda$-mean on $H$;
\item there exists an $X$-valued invariant $\lambda$-mean on $G/H$;
\item if, moreover, $S$ and $T$ are isomorphic semigroups and there exists an $X$-valued invariant mean on $S$, then there exists such a mean on $T$ too.
\end{romanenumerate} \end{lemma}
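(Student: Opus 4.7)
The plan is to address the three assertions in the order (iii), (ii), (i), since the first two amount to routine transfer arguments while (i) carries the genuine content.

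For (iii), a semigroup isomorphism $\varphi\colon T\to S$ induces an isometric isomorphism $\ell_\infty(S,X)\to\ell_\infty(T,X)$ via $f\mapsto f\circ\varphi$ that intertwines both left and right translations because $\varphi$ is a homomorphism. Hence $M'(f) := M(f\circ\varphi^{-1})$ is an invariant $\lambda$-mean on $T$; the three defining axioms are checked by routine bookkeeping.

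For (ii) I plan to pull back along the quotient: define $M'(f) := M(f\circ q)$, where $q\colon G\to G/H$ is the canonical surjection. The pull-back preserves supremum norms, hence $\|M'\|\leq\lambda$; the identity $x\mathds{1}_{G/H}\circ q = x\mathds{1}_G$ gives the normalisation; and the relations $({}_{gH}f)\circ q = {}_g(f\circ q)$ and $(f_{gH})\circ q = (f\circ q)_g$ transfer the bi-invariance of $M$ to $M'$.

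For (i), I will fix a transversal $T\subseteq G$ for the right cosets of $H$, so that every $g\in G$ factors uniquely as $g = p(g)\,t(g)$ with $p(g)\in H$ and $t(g)\in T$. Extending $f\in\ell_\infty(H,X)$ by $\tilde{f}(g) := f(p(g))$ and setting $M'(f) := M(\tilde{f})$ produces an operator of norm at most $\lambda$ satisfying $M'(x\mathds{1}_H) = x$. Left $H$-invariance follows from the equivariance $p(sg) = s\cdot p(g)$ valid for $s\in H$, which yields ${}_s\tilde{f} = \widetilde{{}_sf}$ and then transfers via left invariance of $M$.

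The main obstacle is right $H$-invariance. A direct computation using normality gives $\tilde{f}_s(ht) = f(h\cdot tst^{-1})$, which in general differs from $\widetilde{f_s}(ht) = f(hs)$ whenever the representative $t$ fails to commute with $s$. My plan is to compare $M(\widetilde{f_s})$ with $M(\tilde{f}_s) = M(\tilde{f})$, the latter equality following from right invariance of $M$ on $G$. To show that $M$ annihilates the difference $\widetilde{f_s}-\tilde{f}_s$, I would exploit the symmetric construction based on a transversal for the left cosets, which yields an extension that is instead right $H$-equivariant; the two extensions differ pointwise only by the conjugation action of $T$ on $H$, and normality ensures this discrepancy stays within $H$ and is absorbed by the bi-invariance of $M$. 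The delicate point is to conclude while preserving the constant $\lambda$, rather than incurring the $\lambda^2$ loss that a naive iterated-mean combination of a left- and a right-invariant auxiliary mean would produce.
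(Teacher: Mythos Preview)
Your constructions for all three parts are exactly those in the paper: transport along an isomorphism for (iii), pull-back along the quotient map for (ii), and extension of $f$ across a transversal for (i). The paper's proof is in fact terser than yours---it simply writes down $\widehat{M}=M\circ\iota$ for (i) and asserts that it is ``the sought invariant mean'' without checking either left or right $H$-invariance.

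Where you diverge from the paper is in worrying about right $H$-invariance in (i). You are correct that the identity $\widetilde{f_s}=(\tilde f)_s$ fails in general: on the coset $Ht$ one has $(\tilde f)_s(ht)=f(h\cdot tst^{-1})$ while $\widetilde{f_s}(ht)=f(hs)$, and these differ unless $t$ centralises $s$. The paper does not address this point at all. Note, however, that every application of the lemma in the paper is to an \emph{abelian} group $G$ (the additive group of $X^{**}$, free abelian groups and their quotients), where ${}_sf=f_s$ and right invariance follows trivially from left invariance; so the issue is vacuous for the paper's purposes.

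Your sketched remedy for general $G$---comparing the right-coset extension $\tilde f$ with the left-coset extension and hoping bi-invariance of $M$ absorbs the discrepancy---is not a proof as it stands. The two extensions differ on $Ht$ by $f(h)$ versus $f(t^{-1}ht)$, and there is no single $G$-translate relating them across all cosets simultaneously, so it is unclear how to conclude $M(\tilde f)=M(\check f)$ or $M(\widetilde{f_s})=M(\tilde f)$ without averaging twice and incurring the $\lambda^2$ loss you yourself flag. If you want the lemma in its stated generality with the same constant $\lambda$, this step needs a genuine argument (or, more pragmatically, a remark that only the abelian case is needed downstream).
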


\begin{proof}
In order to prove (i), let $\iota\colon \ell_{\infty}(H, X)\to \ell_\infty(G,X) = \ell_{\infty}(\bigsqcup_{i\in I} Hg_i, X)$ be given by $\iota(f) =  \bigsqcup_{i\in I} f_i$, where   $f_i(hg_i) = f(h)$ ($i\in I$). Here, $(g_i)_{i\in I}$ are representative elements in $G$ chosen so that $G = \bigsqcup_{i\in I} Hg_i$. Then $\widehat{M} = M \circ \iota$ is the sought invariant mean on $H$.\smallskip

For (ii), let $\pi\colon G\to G/H$ be the canonical quotient map. Then the map $\widehat{M}(f) = M( f\circ \pi)$ ($f\in \ell_\infty(G/H,X)$) is the sought invariant mean on $G/H$.\smallskip

Clause (iii) is quite trivial. Let $\theta\colon T\to S$ be an isomorphism and $M\colon \ell_\infty(S, X)\to X$ a~mean on $S$. Then $Nf = M(f\circ \theta)$ ($f\in \ell_\infty(T,X)$) is an invariant mean on $T$
\end{proof}

We shall require towards the proof the principle of local reflexivity due to Lindenstrauss and Rosenthal (\cite{lindros}), which we state here for the future reference.

\begin{theorem}\label{LR}Let $X$ be a Banach space and let $F\subset X^{**}$ be a finite-dimensional subspace. Then, for each $\varepsilon \in (0,1)$ there exists a linear map $P_F^\varepsilon \colon F\to \kappa_X(X)$ such that
\begin{romanenumerate}
\item $(1-\varepsilon)\|x\|\leqslant \|P_F^\varepsilon x\| \leqslant (1+\varepsilon)\|x\| \quad (x\in F)$;
\item $P_F^\varepsilon x = x$ for $x\in F\cap \kappa_X(X)$.
\end{romanenumerate}
\end{theorem}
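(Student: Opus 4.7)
The plan is to prove this classical Principle of Local Reflexivity via the Lindenstrauss--Rosenthal argument, which combines Goldstine's theorem (the weak-$*$ density of $\kappa_X(B_X)$ in $B_{X^{**}}$) with Helly's finite-dimensional interpolation lemma and a net argument on the unit sphere of $F$.

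First I would reduce to the case $F \cap \kappa_X(X) = \{0\}$ by splitting $F = Y \oplus F'$, where $Y := F\cap \kappa_X(X)$ and $F'$ is any algebraic complement. Any linear $T\colon F' \to X$ approximating the inclusion $F' \hookrightarrow X^{**}$ extends to the desired $P_F^\varepsilon$ by setting $P_F^\varepsilon(y + z) = y + \kappa_X(Tz)$ for $y \in Y$ and $z \in F'$, which automatically satisfies (ii). To construct $T$, pick an Auerbach basis $z_1^{**}, \ldots, z_n^{**}$ of $F'$ with biorthogonal functionals $\phi_1, \ldots, \phi_n \in (F')^*$; fix a $\delta$-net $\{y_1, \ldots, y_N\}$ on the unit sphere of $F'$; and via Hahn--Banach pick norm-one functionals $g_k \in X^*$ with $\langle y_k, g_k\rangle = 1$. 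The set $\{g_k\}$ is then $(1-\delta)$-norming for $F'$.

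Next, Helly's theorem applied in $X$ (whose dual norming hypothesis is supplied by $\|z_i^{**}\|\leq 1$) produces $x_1, \ldots, x_n \in X$ with $\|x_i\| \leq 1 + \eta$ satisfying the exact interpolation $\langle g_k, x_i\rangle = \langle z_i^{**}, g_k\rangle$ for all $i$ and $k$. Setting $T(z^{**}) := \sum_i \phi_i(z^{**}) x_i$, linearity yields $\langle g_k, \kappa_X(Tz^{**})\rangle = \langle z^{**}, g_k\rangle$ for every $z^{**}\in F'$ and every $k$, and combining with the $(1-\delta)$-norming property gives the lower bound
\[
\|\kappa_X(Tz^{**})\| \;\geq\; \sup_k|\langle g_k, \kappa_X(Tz^{**})\rangle| \;=\; \sup_k|\langle z^{**}, g_k\rangle| \;\geq\; (1-\delta)\|z^{**}\|.
\]

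The main obstacle is the matching upper bound $\|\kappa_X(Tz^{**})\| \leq (1+\varepsilon)\|z^{**}\|$: a naive triangle-inequality estimate introduces a spurious factor of the dimension $n$. The standard remedy is to ensure that the norming system $\{g_k\}$ also $(1-\delta)$-norms the sum $\kappa_X(T(F')) + F'\subset X^{**}$; once that holds, the duality chain above run in reverse delivers $\|\kappa_X(Tz^{**})\| \leq (1-\delta)^{-1}\|z^{**}\|$. Since $T$ is not available before $\{g_k\}$ is chosen, this circularity is broken either by an iterative bootstrapping (refine $\{g_k\}$ using the current $T$, re-apply Helly, and pass to a compactness limit in the space of operators $F'\to X$) or, more slickly, by a Hahn--Banach separation argument on the affine variety of operators satisfying the interpolation, exhibiting a point within operator-norm $1+\varepsilon$ of the inclusion $F'\hookrightarrow X^{**}$. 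With $\delta,\eta$ chosen small enough in terms of $\varepsilon$, the resulting $P_F^\varepsilon$, extended by the identity on $Y$, satisfies both (i) and (ii).
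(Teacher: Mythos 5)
The paper does not prove Theorem~\ref{LR} at all: it is quoted as a known result of Lindenstrauss and Rosenthal \cite{lindros}, so your attempt has to stand on its own, and it does not. The first genuine gap is the reduction step. You assert that \emph{any} linear $T\colon F'\to X$ ``approximating the inclusion'' gives a valid $P_F^\varepsilon(y+z)=y+\kappa_X(Tz)$. If ``approximating'' means approximation in operator norm, such a $T$ need not exist at all: for $X=c_0$ and $u=(1,1,1,\dots)\in\ell_\infty=X^{**}$, every $x\in c_0$ has $\|\kappa_X x-u\|\geqslant 1$, so the inclusion of $F'={\rm span}\{u\}$ admits no pointwise approximation from $X$ whatsoever. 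If instead ``approximating'' means what your construction actually delivers---agreement with the inclusion on finitely many $g_k$ norming $F'$, plus two-sided norm bounds \emph{on} $F'$---then $P$ can degenerate on mixed vectors: take $F={\rm span}\{e_1,u\}$, $Y={\rm span}\{e_1\}$, $F'={\rm span}\{u\}$; your recipe allows $g_1=e_1^*$ (which norms $F'$ exactly), Helly may then return $Tu=e_1$, an isometry on $F'$ satisfying the interpolation, and yet $P(e_1-u)=0$ while $\|e_1-u\|_\infty=1$. The lower estimate for vectors $y+z$ requires the $g_k$ to norm all of $F$, not just $F'$ (a fixable omission, but symptomatic: conditions (i) and (ii) interact, which is why the classical proofs build $Tx=x$ for $x\in F\cap\kappa_X(X)$ into the linear system instead of splitting off an algebraic complement). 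Incidentally, norm-one $g_k\in X^*$ with $\langle y_k,g_k\rangle=1$ need not exist either---norm-attaining functionals on $X^{**}$ live in $X^{***}$; one can only arrange $\langle y_k,g_k\rangle>1-\delta$, which is harmless.

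The second, fatal, gap is the upper bound, which is the actual content of local reflexivity, and none of your three remedies works. (a) Demanding that $\{g_k\}$ $(1-\delta)$-norm the subspace $\kappa_X(T(F'))+F'$ is self-contradictory: every difference $\kappa_X(Tz)-z$ lies in that subspace and is annihilated by every $g_k$ (this is precisely your interpolation identity), so the norming property would force $\kappa_X(Tz)=z$ for all $z$, i.e.\ $F'\subseteq\kappa_X(X)$, which is excluded after your reduction. (b) The bootstrap-plus-compactness route is unavailable: bounded sets of operators $F'\to X$ are not compact in any topology under which limits still take values in $X$ when $X$ is infinite-dimensional; a bounded net of such operators has pointwise-weak$^*$ cluster points only in the space of operators $F'\to X^{**}$, which is exactly the obstruction the theorem is meant to overcome, not a way around it. (c) The separation idea gestures at genuinely correct proofs (Dean's identity identifying the bidual of the space of operators $F'\to X$ with the operators $F'\to X^{**}$, combined with Goldstine and a Helly-type perturbation, or Behrends' separation argument), but as phrased---``a point within operator-norm $1+\varepsilon$ of the inclusion $F'\hookrightarrow X^{**}$''---it again requests the impossible pointwise approximation, and no argument is supplied; plain Hahn--Banach separation inside the space of operators $F'\to X$ merely reduces the claim to that duality identity, which is essentially equivalent to what is being proved. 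In short, you have correctly reproduced the easy half (interpolation plus norming gives the lower estimate), but the reduction and the upper estimate---the parts that make local reflexivity a theorem rather than an exercise---are missing or wrong.
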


\section{Proofs of the main results}
We start with the proof of Theorem B on which Theorem A is reliant. The proof proceeds by partitioning $S := \textrm{Fin } V \setminus \{ 0 \} $ (the set of all non-zero finite-dimensional subspaces of an infinite-dimensional vector space $V$) into disjoint sets $\mathcal{P}$ and $\mathcal{Q}$. Then, we construct a~bijection between $\textrm{Fin } V$ and the free commutative monoid $F_{\mathcal{P}}$, obtaining the operation $\ast$ by pulling back the monoid operation through this bijection.
\begin{proof}[Proof of Theorem B]

We begin by using Zermelo's well-ordering principle to construct a~bijection $\theta\colon S \rightarrow \alpha$, where $\alpha$ is a cardinal number (that is, an initial ordinal). This induces a well-ordering $\phi$ on the collection $F_S$ of finite multisets of elements from $S$. Specifically, if $A \in F_S$ contains elements $U_1, U_2, \dots, U_k$ with respective multiplicities $c_1, c_2, \dots, c_k$, then we define:
$$ \phi(A) := \omega^{\theta(U_1)} \cdot c_1 + \dots + \omega^{\theta(U_k)} \cdot c_k, $$
where without loss of generality we have taken $\theta(U_1) > \dots > \theta(U_k)$. The well-ordering $\phi$ on $F_S$ is known as \emph{colexicographical order}, and the order type is again $\alpha$.

$F_S$ is a graded monoid, where the grading $v(A) = c_1 + \dots + c_k$ is the total number of elements (counted with multiplicity) in the multiset. It is convenient to define, for each $n \in \mathbb{N}$, the set
$$ F_S^{(n)} := \{ A \in F_S\colon v(A) \geqslant n \}. $$

In particular, $F_S^{(0)} = F_S$, $F_S^{(1)}$ is the collection of non-empty multisets in $F_S$, and $F_S^{(2)}$ is the collection of non-empty non-singleton multisets in $F_S$.

For each $\beta \leqslant \alpha$, we define an injective partial function $g_{\beta}\colon F_S^{(2)} \rightarrow S$. Specifically:

\begin{itemize}
    \item When $\beta = 0$ is the zero ordinal, $g_{\beta}$ is the empty partial function.
    \item When $\beta = \gamma^{+}$ is a successor ordinal, $g_{\beta}$ is obtained from $g_{\gamma}$ by extending it with another element $g_{\beta}(A) = W$. $A$ is defined to be the first (according to the colexicographical order $\phi$) element $A \in F_S^{(2)}$ such that $A$ is not in the domain of $g_{\gamma}$ and no element of $A$ is in the image of $g_{\gamma}$. $W$ is an arbitrary element of $S$ such that:
    \begin{enumerate}
        \item For all $B \subsetneq A$ with $v(B) \geqslant 2$, $g_{\gamma}(B)$ is a subspace of $W$ (note that the colexicographical ordering on $F_S^{(2)}$ ensures that we have already defined $g_{\gamma}$ on all such subsets $B$);
        \item For all $U \in A$, $U$ is a subspace of $W$;
        \item $W$ occurs later in the well-ordering on $S$ induced by $\theta^{-1}$ than any element in $A$, or any element in any
        $B \in \textrm{dom}(g_{\gamma})$, or any element in the image
        of $g_{\gamma}$.
    \end{enumerate}
    It is necessary to show that these conditions are consistent.
    Conditions (1) and (2) merely specify that $W$ contains a particular finite-dimensional subspace $W'$ (the Minkowski sum of all $U \in A$ and all $g_\gamma(B)$ where $B \subsetneq A$ with $v(B) \geqslant 2$); as $V$ is infinite-dimensional, the collection of spaces $\{ W \in \textrm{Fin } V\colon W' \subseteq W \}$ has the same cardinality $\textrm{Fin } V$ itself. Owing to our choice of an initial ordinal for the well-ordering, condition (3) only eliminates a strictly smaller set of candidates for $W$, so the set of admissible $W$ is non-empty.
    \item When $\beta$ is a limit ordinal, $g_{\beta}$ is the union of the partial functions $g_{\gamma}$ (for all $\gamma < \beta$).
\end{itemize}


Let $\mathcal{Q}$ be the image of $g_{\alpha}$ and let
$\mathcal{P} := S \setminus \mathcal{Q}$. We \emph{claim} that the domain
of $g_{\alpha}$ is exactly $F_{\mathcal{P}}^{(2)}$.

To prove that
$F_{\mathcal{P}}^{(2)} \subseteq \textrm{dom}(g_{\alpha})$,
observe that for any set $A \in F_{\mathcal{P}}^{(2)}$, we have
that $A \in F_S^{(2)}$ and $A \cap \mathcal{Q} = \varnothing$; as
such, $A$ must have appeared at some point in the transfinite
induction and therefore belongs to the domain of $g_{\alpha}$.

In the other direction, suppose that $A \in \textrm{dom}(g_{\alpha})$
and $A \notin F_{\mathcal{P}}^{(2)}$. Then there exists an~element
$W \in A$ such that $W \in \mathcal{Q}$. Now, $A$ was introduced into
the domain of $g_{\alpha}$ at some successor ordinal $\beta = \gamma^{+}$
of the transfinite induction. By the definition of $A$, $W$ cannot be in
the image of $g_{\gamma}$, so $W$ must have appeared at some later step
in the transfinite induction. However, that contradicts condition (3),
which forces $W$ to be later in the well-ordering (and therefore not
equal to) any element of $A$.

Consequently, $F_{\mathcal{P}}^{(2)} = \textrm{dom}(g_{\alpha})$ so
we have a total function $g_{\alpha} : F_{\mathcal{P}}^{(2)} \rightarrow
\mathcal{Q}$.


We extend $g_{\alpha}$ to a bijection $f : F_{\mathcal{P}} \rightarrow \textrm{Fin } V$ by setting:

\begin{itemize}
    \item $f(\varnothing) := \{0\}$;
    \item $f(\{V\}) := V$ for all singleton sets $\{V\}$ where $V \in \mathcal{P}$;
    \item $f(A) := g_{\alpha}(A)$ if $v(A) \geqslant 2$.
\end{itemize}

At this point, we define a binary operation $\ast\colon \textrm{Fin } V \times \textrm{Fin } V \rightarrow \textrm{Fin } V$ by $$U_1 \ast U_2 := f(f^{-1}(U_1) \sqcup f^{-1}(U_2)),$$ where $\sqcup$ is the usual `disjoint union' operation that makes $F_{\mathcal{P}}$ into a free commutative monoid. This immediately establishes part (i) of the proposition, that $({\rm Fin}\, V, \ast)$ is a free commutative monoid.\smallskip

What remains to be shown is part (ii) of Theorem B, that for any subspaces $F,G \in {\rm Fin}\, V$ we have $F,G \subseteq F \ast G$. We can assume that neither $F$ nor $G$ is the trivial space $\{0\}$, as this is the identity of the operation $\ast$ and the statement would trivially hold.\smallskip

Consequently, $v(f^{-1}(F)) \geqslant 1$ and $v(f^{-1}(G)) \geqslant 1$, so $v(f^{-1}(F \ast G)) \geqslant 2$. This means that $A := f^{-1}(F \ast G)$ is in $F_{\mathcal{P}}^{(2)}$ and therefore appeared (together with the space $W := F \ast G$) in the transfinite induction. We shall show that $F$ is a subspace of $W := F \ast G$:

\begin{itemize}
    \item If $v(f^{-1}(F)) = 1$, then $f^{-1}(F)$ is the singleton set $\{F\}$. As such, $F \in A$, and therefore $F$ is a subspace of $W$ by condition (2) in the transfinite induction.
    \item If $v(f^{-1}(F)) \geqslant 2$, then $f^{-1}(F) = B \subsetneq A$ and $F = f(B)$ is a subspace of $W$ by condition (1) in the transfinite induction.
\end{itemize}

By symmetry, it follows that $G$ is also a subspace of $F \ast G$, establishing the truth of Theorem B.
\end{proof}

We are now ready to prove Theorem A.

\begin{proof}[Proof of Theorem A] We are now to prove the implications \eqref{cardinality} $\Rightarrow$ \eqref{complement} and \eqref{bidualcase} $\Rightarrow$ \eqref{cardinality}.\smallskip

Without loss of generality we may assume that $X$ is infinite-dimensional. In order to prove the former implication, let us consider the free commutative monoid
$$ S = {\rm Fin}\, X^{**} \oplus \{2^{-k}\colon k\in \mathbb N\},$$
where ${\rm Fin}\, X^{**}$ is the monoid of finite-dimensional subspaces of $X^{**}$ considered in Theorem B and $\{2^{-k}\colon k\in \mathbb N\}$ is the monoid generated by $2$ in the multiplicative group of non-zero real numbers. By Theorem B, $S$ is a free commutative monoid and as such, it is isomorphic to $\mathbb N^{(|X^{**}|)}$. \smallskip

Assume that there exists an $X$-valued invariant $\lambda$-mean on the Gro\-then\-dieck group $G(S)$ of $S$, bearing in mind that $G(S)$ is isomorphic to $\mathbb Z^{(|X^{**}|)}$ and $S$ is isomorphic to $\mathbb N^{(|X^{**}|)}$. By Lemma~\ref{groth} (and Lemma~\ref{normal}(iii)), we may fix an $X$-valued invariant $\lambda$-mean on $S$, $M\colon \ell_\infty(S, X)\to X$.\smallskip

Using the principle of local reflexivity (Theorem~\ref{LR}), for each $\varepsilon = 2^{-k}$ with $k\geqslant 1$ and $F\in {\rm Fin}\, X^{**}$, we fix $P^{\varepsilon}_{F}$ as in the statement. We also set $P^1_F = 0$, the zero operator on $F\in {\rm Fin}\, X^{**}$. \smallskip

For an element $x\in X^{**}$, we define a function $f^x\colon S \to X$ by
$$f^x(F, \delta) = \left\{\begin{array}{ll}P^{\min\{\delta, 1/2\}}_{F}x, & x\in F \\ 0,& x\notin F\end{array}\right.\quad \big((F,\delta)\in S\big).$$
Then $f^x \in \ell_\infty(G, X)$ with $\|f^x\| \leqslant 2\|x\|$. \smallskip

 We define $P\colon X^{**}\to X$ by $$Px = M(f^x)\quad (x\in X^{**})$$ and \emph{claim} that it is the sought linear projection. Certainly, for $x\in \kappa_X(X)$, we have $$Px = M(f^x) =  M (P^{\min\{\boldsymbol{\cdot}, 1/2\}}_{\boldsymbol{\cdot}}x)\!  = M (P^{\min\{\boldsymbol{\cdot}, 1/2\}}_{\boldsymbol{\cdot}\ast[x]}x) = M (x \mathds{1}_{S})  = x,$$
 where $[x]$ stands for the line spanned by $x$. Also, $P$ is linear. Indeed, homogeneity is clear. To see that $P$ is additive, we compute:
$$\begin{array}{lcl}P(x+y)& =& M(f^{x+y})\\
&=&   M \big(P^{\min\{\boldsymbol{\cdot}, 1/2\}}_{\boldsymbol{\cdot}}(x+y)\big)\\
& = & M \big(P^{\min\{\boldsymbol{\cdot}, 1/2\}}_{\boldsymbol{\cdot}\ast [x,y]}x\big) + M \big(P^{\min\{\boldsymbol{\cdot}, 1/2\}}_{\boldsymbol{\cdot}\ast [x,y]}y\big)\\
& = & M \big(P^{\min\{\boldsymbol{\cdot}, 1/2\}}_{\boldsymbol{\cdot}}x\big) + M \big(P^{\min\{\boldsymbol{\cdot}, 1/2\}}_{\boldsymbol{\cdot}}y\big)\\
&=& Px + Py,\end{array}$$
where $[x,y]$ is the linear span of $x$ and $y$ ($x,y\in X^{**})$. Here, we used Theorem B from which it follows that for any $F\in {\rm Fin}\, X^{**}$, $[x,y]\subset F\ast [x,y]$. Finally, for any $x\in X^{**}$, by the translation-invariance of $M$, we have
$$\|Px\| = \|M(f^x)\| \leqslant \lambda \cdot \inf_{k\geqslant 1}\|f^x_{([x], 1/2^k)}\| \leqslant \lambda\cdot \inf_{k\geqslant 1} \sup_{F\in {\rm Fin}\, X^{**}} \|P^{1/2^k}_F x\| \leqslant \lambda\|x\|, $$
which proves that indeed $X$ is $\lambda$-complemented in $X^{**}$.\medskip

In order to prove \eqref{bidualcase} $\Rightarrow$ \eqref{cardinality}, we observe that $X^{**}$ is, in particular, a~$\mathbb Q$-vector space, and being infinite-dimensional (as an $\mathbb R$-vector space), it is thus isomorphic to the direct sum $\mathbb Q^{(|X^{**}|)}$. As such, it contains a subgroup isomorphic to $\mathbb Z^{(|X^{**}|)}$. Thus, if there exists an $X$-valued invariant mean on $X^{**}$, by Lemma~\ref{normal}, there is an $X$-valued invariant mean on a free Abelian group of rank $|X^{**}|$. \end{proof}

\subsection{Countable amenable groups} As already explained, separable Banach spaces with the Metric Approximation Property are 1-complemented in their bidual if and only if they admit an invariant 1-mean on the group of integers. If one replaces the Metric Approximation Property with the $\gamma$-Bounded Approximation Property (with parameter $\gamma \geqslant 1$), then complementation in the bidual is still characterised by the existence of an~invariant mean on the group of integers but the correspondence between the norm of the mean and the upper bound for the norm of a projection from the bidual is impeded by the parameter $\gamma$. Nonetheless, we do not know the answer to the following question.\smallskip

\begin{quote}\emph{Let $X$ be a separable Banach space. Suppose that for some countably infinite amenable group $G$ there exists an $X$-valued invariant mean on $G$. Is $X$ complemented in the bidual $X^{**}$?}
    
\end{quote}

\subsection*{Acknowledgements} The second-named author wishes to express thanks to Rados{\l}aw {\L}ukasik for numerous conversations concerning the problem. Morever, we thank the referee for his or her extremely thorough report.

\end{document}